\newtheorem{lemma}{Lemma}
\newtheorem{proposition}[lemma]{Proposition}
\newtheorem{theorem}[lemma]{Theorem}
\theoremstyle{remark}
\newtheorem{remark}{Remark}
\newcommand{\Ex}{\mathbb{E}}
\newcommand{\Va}{\mathbb{V}}
\newcommand{\IP}{\mathbb{P}}
\newcommand{\Poi}{\mathsf{Poi}}
\newcommand{\Bin}{\mathsf{Bin}}
\newcommand{\ER}{Erd\H{o}s--R\'enyi\ }
\newcommand{\ssq}{\subseteq}
\newcommand{\ga}{\alpha}
\newcommand{\gep}{\varepsilon}
\newcommand{\gd}{\delta}
\newcommand{\gl}{\lambda}
\newcommand{\IN}{\mathbb{N}}
\newcommand{\IR}{\mathbb{R}}
\title{On the intersection of pairs of trees}
\author{M. B\'ona}
\author{F. Burghart}
\author{S. Wagner}
\begin{document}

\begin{abstract} We consider the number of common edges in two independent random spanning trees of a graph $G$. For complete graphs $K_n$, we give a new proof of the fact, originally obtained by Moon, that the distribution converges to a Poisson distribution with expected value $2$. This is applied to show a Poisson limit law for the number of common edges in two independent random spanning trees of an Erd\H{o}s--R\'enyi random graph $G(n,p)$ for constant~$p$, as well as a central limit theorem in the case where $p\to 0$ and $p\geq n^{-2/3+\gep}$. We also use the same method to prove an analogous result for complete multipartite graphs.
\end{abstract}

\maketitle

\section{Introduction}
For two given combinatorial structures $S_1$ and $S_2$ taken over the same underlying set and selected uniformly at random, there is intrinsic interest in studying the distribution of $|S_1\cap S_2|$. An early paper devoted to such questions is \cite{aldous} by  Aldous. Trees are frequent objects in 
this line of research. For instance, in \cite{bryant}, Bryant, McKenzie, and Steel  proved  that the maximum size of a common subtree for two independent copies of a uniformly random binary tree with $n$ leaves is likely to be of order
$O(n^{1/2})$.  Pittel \cite{pittel} proved the bound $O(n^{1/2})$ for the maximum size of a subtree common to two independent copies of a random rooted tree, namely the terminal tree of a critical Galton--Watson branching process
 conditioned on the total number of leaves being $n$. In \cite{bcp}, the first author, Costin and Pittel proved lower and upper bounds for the likely size of the largest twin subtrees (two subtrees that have the same count of vertices by their degrees) of the uniformly random rooted Cayley tree.
 

In this paper, we consider the following natural question. Let us independently select two spanning trees of the same graph $G$, both uniformly at random. What can be said about the number of edges that the two have in common? For the complete graph $K_n$, the answer is given (implicitly) by Moon in \cite{moon2}: the number of pairs of trees that have exactly $m$ edges in common is
\[\sum_{j=0}^{n-m-1} (-1)^j \binom{m+j}{m} n^{2(n-m-j-2)} \frac{(n-1)!}{(n-m-j-1)!} \sum_{k=0}^{m+j} \binom{n-k}{m+j-k} \frac{n^k}{k!}.\]
We get their proportion by dividing by the total number of pairs (which is $n^{2(n-2)}$):
\[\sum_{j=0}^{n-m-1} (-1)^j \binom{m+j}{m} n^{-2m-2j} \frac{(n-1)!}{(n-m-j-1)!} \sum_{k=0}^{m+j} \binom{n-k}{m+j-k} \frac{n^k}{k!}.\]
Note that
\[n^{-2m-2j} \frac{(n-1)!}{(n-m-j-1)!} \binom{n-k}{m+j-k} \frac{n^k}{k!} 
= \frac{(n-1)^{\underline{m+j}}(n-k)^{\underline{m+j-k}}n^{-2m-2j+k}}{(m+j-k)!k!},\]
where $x^{\underline{\ell}} = x(x-1)\cdots (x-\ell+1)$ denotes a falling factorial. For fixed $\ell$, $x^{\ell}$ is both an upper bound for and asymptotically equal to $x^{\underline{\ell}}$ as $x \to \infty$. So for fixed $m$, $j$ and $k$, we have
\begin{align*}n^{-2m-2j} \frac{(n-1)!}{(n-m-j-1)!} \binom{n-k}{m+j-k} \frac{n^k}{k!} &\sim 
\frac{(n-1)^{m+j}(n-k)^{m+j-k}n^{-2m-2j+k}}{(m+j-k)!k!} \\
&\sim \frac{1}{(m+j-k)!k!}
\end{align*}
as well as
\begin{align}n^{-2m-2j} \frac{(n-1)!}{(n-m-j-1)!} \binom{n-k}{m+j-k} \frac{n^k}{k!} &\leq
\frac{(n-1)^{m+j}(n-k)^{m+j-k}n^{-2m-2j+k}}{(m+j-k)!k!} \nonumber \\
&\leq \frac{1}{(m+j-k)!k!}.\label{eq:aux-upper-bound}
\end{align}
Since
\begin{equation}\label{eq:bound}
 \sum_{j \geq 0} \binom{m+j}{m} \sum_{k=0}^{m+j} \frac{1}{(m+j-k)!k!} = \sum_{j \geq 0} \binom{m+j}{m} \frac{2^{m+j}}{(m+j)!} = \sum_{j \geq 0} \frac{2^{m+j}}{m!j!} = \frac{2^m}{m!} e^2
\end{equation}
converges, we can apply dominated convergence to find that the proportion of pairs of trees that intersect in precisely $m$ edges tends to
\begin{equation}\label{eq:limit}
 \sum_{j \geq 0} (-1)^j \binom{m+j}{m} \sum_{k=0}^{m+j} \frac{1}{(m+j-k)!k!} = \sum_{j \geq 0} (-1)^j \binom{m+j}{m} \frac{2^{m+j}}{(m+j)!} = \frac{2^m}{m!} e^{-2}.
\end{equation}
Very recently, Fuchs and Steel~\cite{fuchs} also proved this result  using a different method. Their
motivation came from medical bioinformatics~\cite{khayatian}. We remark that the number of common edges in two random spanning trees was also recently studied for complete graphs with random weights~\cite{makowiec}.

So the limiting distribution of the number of common edges is a Poisson distribution with expected value $2$. Heuristically, this can be explained as follows: each edge has the same probability $\frac{2}{n}$ to belong to a spanning tree, thus a probability of $\frac{4}{n^2}$ to belong to two independently chosen spanning trees. Since there are $\binom{n}{2}$ edges that are (as we will see) ``almost independent'', the law of rare events suggests convergence to a Poisson distribution.

In the following section, we provide an alternative proof of the Poisson limit for complete graphs by proving convergence of moments. In Section~\ref{sec:Gnp}, using a conditioning argument, this is then applied to pairs of random spanning trees in the Erd\H{o}s--R\'enyi random graph $G(n,p)$ for constant $p$, yielding a Poisson-limit with parameter $2/p$. For $p\to 0$ sufficiently slowly, we also prove a central limit theorem.

Finally, an analogous result is shown for complete multipartite graphs in Section~\ref{sec:multipartite}. The paper concludes with a brief discussion of the analogous problem for arbitrary graphs.

\section{Complete graphs}

Consider two spanning trees on the same vertex set $[n] = \{1,2,\ldots,n\}$, both selected uniformly at random among all $n^{n-2}$ possibilities. We are interested in the number of edges these two trees have in common.

We need the following known counting formula (see \cite{cameron,moon}). 

\begin{lemma}\label{lem:given_components}
For a given forest $F$ on $[n]$ with components of sizes $n_1,n_2,\ldots,n_k$, the number of spanning trees containing $F$ as a subgraph is $(n_1n_2 \cdots n_k) n^{k-2}$.
\end{lemma}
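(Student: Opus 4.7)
The plan is to contract each component of $F$ to a single vertex and reduce the problem to a weighted enumeration of trees on $k$ vertices. Assume $k \geq 2$ (the case $k=1$ is immediate: $F$ is already the unique spanning tree, and the formula gives $n \cdot n^{-1}=1$). Label the components $C_1,\ldots,C_k$. The key observation is that every spanning tree $T$ of $K_n$ containing $F$ can be uniquely encoded as a pair consisting of a ``quotient tree'' $\tau$ on the vertex set $\{C_1,\ldots,C_k\}$, together with a choice, for each edge $\{C_i,C_j\}$ of $\tau$, of endpoints $u \in C_i$ and $v \in C_j$ specifying the corresponding edge of $T \setminus F$. Each such edge therefore contributes a multiplicative factor of $n_in_j$, and the desired count equals
$$\sum_{\tau} \prod_{\{C_i,C_j\}\in E(\tau)} n_i n_j \;=\; \sum_{\tau} \prod_{i=1}^{k} n_i^{d_i(\tau)},$$
where $d_i(\tau)$ denotes the degree of $C_i$ in $\tau$.

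Next, I would group trees $\tau$ according to their degree sequences and invoke the classical refinement of Cayley's formula: the number of trees on $[k]$ with degree sequence $(d_1,\ldots,d_k)$ satisfying $d_i \geq 1$ and $\sum d_i = 2(k-1)$ equals the multinomial coefficient $\binom{k-2}{d_1-1,\ldots,d_k-1}$. Substituting $e_i := d_i - 1$, the sum above becomes
$$(n_1\cdots n_k)\sum_{\substack{e_1,\ldots,e_k \geq 0 \\ e_1+\cdots+e_k = k-2}} \binom{k-2}{e_1,\ldots,e_k}\prod_{i=1}^{k} n_i^{e_i} \;=\; (n_1\cdots n_k)(n_1+\cdots+n_k)^{k-2} \;=\; (n_1\cdots n_k)\, n^{k-2}$$
by the multinomial theorem, which is exactly the claimed expression.

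The main obstacle is really the encoding step rather than the algebra: one must check that any assignment of endpoint choices to the edges of $\tau$ always produces a genuine spanning tree of $K_n$ (no cycles are created, and full connectivity is achieved), and that every spanning tree containing $F$ arises from exactly one such $(\tau, \text{endpoint data})$ pair. Once this bijection is in place, the rest of the proof is just the degree-sequence refinement of Cayley's formula—which itself can be justified in a few lines using Pr\"ufer sequences—combined with a single application of the multinomial theorem.
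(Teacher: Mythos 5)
Your argument is correct, but note that the paper does not prove this lemma at all: it is quoted as a known counting formula with references to Cameron--Kagan and Moon, so any proof you give is necessarily a different route from the paper's (which is citation). What you propose is the classical self-contained proof of the generalized Cayley formula: contract the components of $F$, observe that a spanning tree $T \supseteq F$ projects to a tree $\tau$ on $k$ vertices with edge multiplicities $n_i n_j$, and then evaluate $\sum_\tau \prod_i n_i^{d_i(\tau)}$ via the degree-sequence refinement of Cayley's formula (Pr\"ufer sequences) and the multinomial theorem. The step you flag as the ``main obstacle'' is in fact routine and you could close it in two lines: given $\tau$ and a choice of endpoints for each of its $k-1$ edges, the resulting graph has $(n-k)+(k-1)=n-1$ edges and is connected (each $C_i$ is connected by $F$, and $\tau$ links the components), hence is a tree -- no separate acyclicity check is needed; conversely, for $T \supseteq F$ the $k-1$ edges of $T\setminus F$ cannot have both endpoints in one component nor can two of them join the same pair of components (either would create a cycle in $T$), so contracting yields a genuine tree $\tau$ and the endpoint data is recovered uniquely. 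With that observation spelled out, your proof is complete; the Cameron--Kagan reference cited in the paper gives a shorter, slicker derivation, but yours has the advantage of being elementary and fully bijective.
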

In particular, the probability that a fixed edge is contained in a random spanning tree on $[n]$ is
\begin{equation} \label{average} \frac{2 n^{n-3}}{n^{n-2}} = \frac{2}{n}.\end{equation}
Moreover, the probability that two adjacent edges are both contained in a random spanning tree on $[n]$ is
\[\frac{3 n^{n-4}}{n^{n-2}} = \frac{3}{n^2},\]
and the probability that two vertex-disjoint edges are both contained is
\[\frac{4n^{n-4}}{n^{n-2}} = \frac{4}{n^2},\]
which means that they are independent. Now let us consider a pair of two spanning trees with vertex set $[n]$, both chosen uniformly at random, and let $X_n$ be the number of common edges. For every possible edge $e$, we let $Z_e$ be the indicator random variable that is $1$ if both trees contain $e$, and $0$ otherwise. We can then write
\begin{equation}\label{eq:sum_of_ind}
X_n = \sum_e Z_e.
\end{equation}
We consider the moments of $X_n$ now.

\begin{proposition}\label{prop:moments}
For every fixed positive integer $r$, the $r$-th moment of $X_n$ is asymptotically equal to
\[e^{-2} \sum_{k \geq 0} \frac{2^k k^r}{k!} + O(n^{-1}),\]
i.e., it converges to the $r$-th moment of a Poisson random variable with expected value $2$.
\end{proposition}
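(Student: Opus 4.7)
The plan is to use the method of factorial moments. Since the identity $x^r = \sum_{s=0}^{r} S(r,s) (x)_s$ expresses ordinary powers in terms of falling factorials (with $S(r,s)$ the Stirling numbers of the second kind), we have $\Ex[X_n^r] = \sum_{s=0}^{r} S(r,s)\, \Ex[(X_n)_s]$. Moreover, the $r$-th moment of a $\Poi(2)$ random variable equals both $\sum_{s=0}^{r} S(r,s) 2^s$ and, by Dobinski's formula, $e^{-2}\sum_{k \geq 0} 2^k k^r / k!$. Hence the proposition reduces to proving
$$\Ex[(X_n)_s] = 2^s + O(n^{-1})$$
for every fixed non-negative integer $s \le r$.

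To establish this factorial moment estimate, expand
$$\Ex[(X_n)_s] = \sum_{(e_1,\ldots,e_s)} \IP(Z_{e_1} = \cdots = Z_{e_s} = 1),$$
where the sum runs over ordered $s$-tuples of distinct edges of $K_n$. The joint probability vanishes whenever $F = \{e_1,\ldots,e_s\}$ contains a cycle, so we may restrict to forests. For such $F$, applying Lemma~\ref{lem:given_components} independently to each of the two spanning trees and taking the product of the two resulting probabilities yields $(m_1 \cdots m_c)^2 / n^{2s}$, where $m_1,\ldots,m_c$ denote the sizes of the non-trivial components of $F$ (and $v := \sum_i m_i$, the number of non-isolated vertices, satisfies $v - c = s$). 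We then classify the $s$-edge forests by isomorphism type and split the sum into matchings and non-matchings.

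The matching contribution is exactly $\frac{(n)_{2s}}{2^s} \cdot \frac{4^s}{n^{2s}} = 2^s \cdot \frac{(n)_{2s}}{n^{2s}} = 2^s + O(n^{-1})$, using that there are $(n)_{2s}/2^s$ ordered $s$-tuples of pairwise vertex-disjoint edges, and that each corresponding forest has $\prod_i m_i = 2^s$. For any non-matching isomorphism type, some $m_i \geq 3$ forces $v \leq 2s-1$; the number of labelled copies in $K_n$ is $O(n^v)$ while each contributes $O(n^{-2s})$, giving a combined contribution of $O(n^{v-2s}) = O(n^{-1})$ per type. Since the number of isomorphism types of $s$-edge forests is finite (for fixed $s$), summing over all non-matching types still yields $O(n^{-1})$.

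I expect the chief technical obstacle to be the bookkeeping in this last step, namely ensuring uniformity of the implicit constants in $\prod m_i^2$ across all non-matching isomorphism types; however, since $s \leq r$ is fixed, this reduces to an elementary finite case analysis, because $m_i \geq 2$ and $\sum (m_i-1) = s$ jointly bound $\prod m_i^2$ in terms of $s$ alone. Once $(\star)$ is in hand, combining it with the Stirling expansion gives $\Ex[X_n^r] = \sum_{s=0}^{r} S(r,s) 2^s + O(n^{-1})$, which matches the Dobinski form stated in the proposition.
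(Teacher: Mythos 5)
Your proposal is correct and is essentially the paper's own argument in factorial-moment clothing: the Stirling coefficients $S(r,s)$ are exactly the paper's $r!\,[t^r](e^t-1)^s/s!$, and your matching/non-matching split based on Lemma~\ref{lem:given_components} (main term $2^s$ from the $(n)_{2s}/2^s$ ordered disjoint $s$-tuples, error $O(n^{v-2s})=O(n^{-1})$ from the finitely many other forest types) mirrors the paper's count of disjoint versus non-disjoint edge sets. The final passage via Touchard/Dobinski to the stated form $e^{-2}\sum_{k\ge 0}2^k k^r/k!$ matches the paper's generating-function step, so there is no gap.
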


\begin{proof}
By~\eqref{eq:sum_of_ind},
\[\Ex(X_n^r) = \sum_{e_1,e_2,\ldots,e_r}  \Ex(Z_{e_1}Z_{e_2} \cdots Z_{e_r}),\]
the sum being over all ordered $r$-tuples of (not necessarily distinct) edges. The summands can be simplified to terms of the form
\[\Ex(Z_{e_1}^{a_1} Z_{e_2}^{a_2} \cdots Z_{e_s}^{a_s}),\]
where the $e_i$ are distinct. Since the $Z_{e_i}$ are indicator random variables, this is equal to
\begin{equation}\label{eq:red_term}
\Ex(Z_{e_1}Z_{e_2}\cdots Z_{e_s}).
\end{equation}
The sum of the coefficients of all terms that reduce to~\eqref{eq:red_term} is
\[\sum_{\substack{a_1,a_2,\ldots,a_s \geq 1 \\ a_1 + a_2 + \cdots + a_s = r}} \binom{r}{a_1,a_2,\ldots,a_r}
= r! [t^r] (e^t-1)^s.\]
If $e_1,e_2,\ldots,e_s$ are disjoint edges, then by Lemma~\ref{lem:given_components}, we have
\[\Ex(Z_{e_1}Z_{e_2}\cdots Z_{e_s}) = \Big(\frac{2^s n^{n-s-2}}{n^{n-2}}\Big)^2 = 2^{2s} n^{-2s}.\]
Now consider an arbitrary set of $s$ distinct edges $e_1,e_2,\ldots,e_s$. If some of these edges form a cycle, then clearly $\Ex(Z_{e_1}Z_{e_2}\cdots Z_{e_s}) = 0$ as spanning trees cannot contain cycles. Otherwise, they induce a forest with $n-s$ components on the vertex set $[n]$. If $m_1,m_2,\ldots,m_{n-s}$ are its component sizes, then by Lemma~\ref{lem:given_components} there are
\[(m_1m_2\cdots m_{n-s})n^{n-s-2}\]
spanning trees that contain all the edges $e_1,e_2,\ldots,e_s$. By the elementary inequality $m \leq 2^{m-1}$ that is valid for all positive integers $m$, we have
\[(m_1m_2\cdots m_{n-s})n^{n-s-2} \leq 2^{m_1+m_2+\cdots+m_{n-s}-n+s} n^{n-s-2} = 2^s n^{n-s-2}.\]
It follows that
\[\Ex(Z_{e_1}Z_{e_2}\cdots Z_{e_s}) = 
\Big(\frac{(m_1m_2\cdots m_{n-s})n^{n-s-2}}{n^{n-2}}\Big)^2
\leq \Big(\frac{2^s n^{n-s-2}}{n^{n-2}}\Big)^2 = 2^{2s} n^{-2s},\]
thus $\Ex(Z_{e_1}Z_{e_2}\cdots Z_{e_s}) = O(n^{-2s})$ in all cases.
Of the $\binom{n(n-1)/2}{s}$ different sets of $s$ edges, the number of sets of disjoint edges is
\[\binom{n}{2s} (2s-1)!! = \frac{n(n-1)\cdots (n-2s+1)}{2^s s!},\]
since there are $\binom{n}{2s}$ choices for the vertices and $(2s-1)!!$ ways to pair them. Note that $\binom{n(n-1)/2}{s}$ and $\binom{n}{2s} (2s-1)!!$ are both $\frac{n^{2s}}{2^s s!} + O(n^{2s-1})$, implying that the number of sets of $s$ edges that are not all disjoint is $O(n^{2s-1})$. So it follows that
\begin{align*}
\Ex(X_n^r) &= \sum_{s=1}^r \sum_{\{e_1,e_2,\ldots,e_s\}} r! [t^r] (e^t-1)^s \Ex(Z_{e_1}Z_{e_2}\cdots Z_{e_s}) \\
&= \sum_{s=1}^r r! [t^r] (e^t-1)^s \Big( \binom{n}{2s} (2s-1)!! \frac{2^{2s}}{n^{2s}} + O \Big( n^{2s-1} \cdot n^{-2s} \Big) \Big) \\
&= \sum_{s=1}^r r! [t^r] (e^t-1)^s \Big( \frac{2^s}{s!} + O(n^{-1}) \Big).
\end{align*}
Thus the $r$-th moment of $X_n$ converges to
\begin{align*}
\sum_{s=1}^r r! [t^r] (e^t-1)^s \frac{2^s}{s!} &= r! [t^r] \sum_{s = 0}^{\infty} \frac{2^s (e^t-1)^s}{s!} = r! [t^r] e^{2(e^t-1)} \\
&= r! e^{-2} [t^r] \sum_{k \geq 0} \frac{(2e^t)^k}{k!} = e^{-2} \sum_{k \geq 0} \frac{2^k k^r}{k!},
\end{align*}
completing the proof.
\end{proof}
Since the Poisson distribution is characterized by its moments, this immediately implies the following theorem:

\begin{theorem}\label{thm:labelled1}
The number of common edges in a pair of uniform random spanning trees converges in distribution to a Poisson distribution with expected value $2$. Thus the probability that the number of common edges is equal to $m$ converges to $\frac{2^m}{e^2 m!}$.
\end{theorem}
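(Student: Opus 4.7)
The plan is to deduce the theorem from Proposition~\ref{prop:moments} via the method of moments. The essential observation is that a Poisson distribution with parameter $2$ (or indeed any parameter) is uniquely determined by its moments: its moment generating function $\exp(2(e^t-1))$ is entire, so Carleman's condition (or equivalently the finite radius of convergence of the moment generating function being strictly positive) holds, which implies the moment problem has a unique solution. Hence, if a sequence of nonnegative integer-valued random variables has moments converging to those of a Poisson$(2)$ variable, then the sequence converges in distribution to that Poisson variable.

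Concretely, I would proceed as follows. First, fix a nonnegative integer $r$ and invoke Proposition~\ref{prop:moments} to conclude that $\Ex(X_n^r) \to e^{-2} \sum_{k \ge 0} \frac{2^k k^r}{k!}$ as $n \to \infty$, which is precisely the $r$-th moment of a Poisson$(2)$ random variable (as computed at the end of the proof of Proposition~\ref{prop:moments}). Since this convergence holds for every $r$, and the limiting distribution is moment-determinate, the method of moments yields $X_n \xrightarrow{d} \Poi(2)$.

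For the second assertion, convergence in distribution of integer-valued random variables to an integer-valued limit is equivalent to pointwise convergence of the probability mass functions. Thus $\IP(X_n = m) \to \IP(\Poi(2) = m) = \frac{2^m}{e^2 m!}$ for every fixed nonnegative integer $m$, which is the claim of the theorem.

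There is no real obstacle here, since the hard work has already been done in Proposition~\ref{prop:moments}; the only substantive point to verify is moment-determinacy of the Poisson law, and this is a standard fact that can either be cited or quickly justified by noting that $\sum_r \Ex(\Poi(2)^r)^{-1/(2r)} = \infty$ (Carleman), or more directly by observing that the moment generating function of $\Poi(2)$ is entire.
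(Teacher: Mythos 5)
Your proposal is correct and follows exactly the paper's route: the paper also deduces Theorem~\ref{thm:labelled1} directly from Proposition~\ref{prop:moments} by observing that the Poisson distribution is determined by its moments, so the method of moments gives convergence in distribution (and hence of the point probabilities). Your additional remarks on Carleman's condition and on pointwise convergence of mass functions for integer-valued limits simply make explicit what the paper leaves as a standard fact.
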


\begin{remark}
Carrying out the precise calculations for the first two moments, one finds that the expected value of $X_n$ is
\[\Ex(X_n) = \frac{2(n-1)}{n},\]
while the variance is
\[\Va(X_n) = \frac{(n-1)(n-2)(2n-3)}{n^3}.\]
\end{remark}

\begin{remark}
    Using the Chen--Stein method, specifically in the formulation of \cite{AGG89}, yields an additional bound on the total variation distance for the convergence in Theorem~\ref{thm:labelled1}. Recall here that the total variation distance of two probability measures $P$ and $Q$ on the same measure space $(\Omega,\mathcal{F})$ is defined as
    \[\sup_{A \in \mathcal{F}} |P(A) - Q(A)|.\]   
    Indeed, continuing the notation from the method-of-moments proof, we consider the same indicator random variables $Z_e$. For every edge $e$, define $B_e$ to be the set of edges intersecting $e$ (including $e$ itself). Next we show that $Z_e$ is independent of $\{Z_{e'}:e'\in B_e^c\}$. Consider any subset $M$ of $B_e^c$, and let $Z_M = \prod_{e' \in M} Z_{e'}$ be the indicator random variable of the event that all edges of $M$ are contained in both spanning trees. If some of the edges of $M$ form a cycle, then $Z_M$ is always $0$, thus $\Ex(Z_eZ_M) = 0 = \Ex(Z_e)\Ex(Z_M)$. Otherwise, let $m_1,m_2,\ldots,m_r$ be the component sizes of the forest induced on the vertex set $[n]$ by the edges in $M$ (including components consisting of a single vertex, two of which are the ends of $e$). Thus it follows from Lemma~\ref{lem:given_components} that 
    \[\Ex(Z_eZ_M) = \Big( \frac{(2m_1m_2 \cdots m_r)n^{r-3}}{n^{n-2}} \Big)^2 = \Big( \frac{2}{n} \Big)^2 \Big( \frac{(m_1m_2 \cdots m_r)n^{r-2}}{n^{n-2}} \Big)^2 = \Ex(Z_e)\Ex(Z_M).\]
    This means that $Z_e$ is independent of $Z_M$ for all $M \subseteq B_e^c$, and thus independent of $\{Z_{e'}:e'\in B_e^c\}$.
    
Now Theorem~1 of \cite{AGG89} gives the following estimate for the total variation distance between $X_n$ and a Poisson distribution with the same expected value $2 - \frac{2}{n}$:
    \begin{align*}
        d_{TV}(X_n,\Poi(2-2/n)) 
        &\leq \left( \sum_{e} \sum_{e'\in B_e} \Ex[Z_e]\Ex[Z_{e'}] + \sum_{e} \sum_{e\neq e'\in B_e} \Ex[Z_eZ_{e'}]\right)\\
        &= \left( \binom{n}{2} (2n-3) \frac{16}{n^4} + \binom{n}{2} (2n-4) \frac{9}{n^4} \right) \leq \frac{25}{n}.
    \end{align*}
    Combined with an estimate on the total variation distance between Poisson distributions of different parameters, such as $d_{TV}(\Poi(2-2/n),\Poi(2))\leq 1-e^{-2/n} \leq 2/n$ (cf. equation (2.2) in \cite{AJ05}), this yields 
    \[
        d_{TV}(X_n,\Poi(2)) \leq \frac{27}{n}.
    \]
\end{remark}

\begin{remark}
For $k \geq 3$ independent trees, the intersection is empty with high probability: since every edge occurs in a random spanning tree of $K_n$ with probability $\frac{2}{n}$, the probability that there is a common edge is at most
\[\binom{n}{2} \Big( \frac{2}{n} \Big)^k\]
by the union bound, which goes to $0$ for every $k \geq 3$.

One can be slightly more precise: recall that the probability that two given edges are both contained in a random spanning tree of $K_n$ is either $\frac{3}{n^2}$ or $\frac{4}{n^2}$ (depending on whether they are adjacent or not). So we can apply the Bonferroni inequality
\[ \IP \Big( \bigcup_{1 \leq i \leq \binom{n}{2}} A_i \Big) \geq \sum_{1 \leq i \leq \binom{n}{2}} \IP(A_i) - \sum_{1 \leq i < j \leq \binom{n}{2}} \IP(A_i \cap A_j),\]
where $A_i$ stands for the event that the $i$-th edge is contained in all $k$ trees, to estimate the probability that there is a common edge from below as well: it is at least
\[\binom{n}{2} \Big( \frac{2}{n} \Big)^k - \frac{n(n-1)(n-2)}{2} \Big( \frac{3}{n^2} \Big)^k - \frac{n(n-1)(n-2)(n-3)}{8} \Big( \frac{4}{n^2} \Big)^k.\]
This means that the probability that $k$ random spanning trees of $K_n$ (where $k \geq 3$) have nonempty intersection is $\binom{n}{2} \big( \frac{2}{n} \big)^k - O\big(n^{4-2k}\big)$.
It is also easy to see that conditioned on the intersection being nonempty, the probability that it consists of a single edge tends to $1$.
\end{remark}

\section{\ER random graphs}\label{sec:Gnp}

By an application of Bayes' law, we can generalize the Poisson convergence in Theorem~\ref{thm:labelled1} to the \ER random graph $G(n,p)$ for constant $p$:
\begin{theorem}\label{thm:random}
 Let $G\sim G(n,p)$ for constant $p>0$ and denote by $X$ the number of joint edges between two independent, uniformly chosen spanning trees of $G$, where we declare $X=0$ in the event that $G$ is disconnected. Then $X\to \Poi(2/p)$ in distribution, that is, for every $m\geq 0$,
 \begin{equation}\label{eq:Poi2p}
  \IP(X=m) \to e^{-2/p} \frac{(2/p)^m}{m!}
 \end{equation}
 as $n\to\infty$.
\end{theorem}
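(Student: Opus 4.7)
\emph{The plan} is to reduce Theorem~\ref{thm:random} to Theorem~\ref{thm:labelled1} through a Bayes-type conditioning on an auxiliary sample space, and then argue that the auxiliary model and the model of Theorem~\ref{thm:random} yield the same limiting distribution.

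\emph{Setup and Bayes identity.} On an auxiliary probability space I would take $T_1,T_2$ independent and uniform on the $n^{n-2}$ spanning trees of $K_n$, and $G\sim G(n,p)$ independent of $(T_1,T_2)$. On $A:=\{T_1\cup T_2\subseteq G\}$ the graph $G$ is automatically connected, and one checks that the conditional distribution of $(T_1,T_2)$ given $G$ is uniform on ordered pairs of spanning trees of $G$---the very conditional law that defines $X$ in the theorem. Using $|T_1\cup T_2|=2(n-1)-|T_1\cap T_2|$ and $\IP(T_1\cup T_2\subseteq G\mid T_1,T_2)=p^{|T_1\cup T_2|}$, Bayes' rule yields
\[
\IP\!\bigl(|T_1\cap T_2|=m \mid A\bigr)=\frac{p^{-m}\,\IP(X_n=m)}{\Ex[p^{-X_n}]},
\]
where $X_n$ denotes the $K_n$-intersection size from Section~2.

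\emph{Passing to the limit.} Theorem~\ref{thm:labelled1} gives $\IP(X_n=m)\to e^{-2}2^m/m!$. For the denominator, combining Lemma~\ref{lem:given_components} with a union bound over $k$-edge forests and an AM--GM bound on the component-size products produces a uniform tail estimate $\IP(X_n\ge k)\le(e^2/2)^k/k!$, which makes $p^{-X_n}$ uniformly integrable and yields $\Ex[p^{-X_n}]\to \sum_k e^{-2}2^k/(p^k k!)=e^{2/p-2}$. Therefore the right-hand side of the Bayes identity converges to $e^{-2/p}(2/p)^m/m!$, exactly the target Poisson probability.

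\emph{Main obstacle.} The conditioning on $A$ re-weights the marginal distribution of $G$ by the factor $\tau(G)^2$, where $\tau$ denotes the spanning-tree count, so $\IP(|T_1\cap T_2|=m\mid A)$ is not literally $\IP(X=m)$. Writing $q_m(G):=\IP(|T_1\cap T_2|=m\mid G)$, the discrepancy equals
\[
\IP(X=m)-\IP\!\bigl(|T_1\cap T_2|=m\mid A\bigr)=-\frac{\mathrm{Cov}(q_m(G),\tau(G)^2)}{\Ex[\tau(G)^2]}.
\]
The hard part will be showing this vanishes. A four-tree analog of the computation in Proposition~\ref{prop:moments} keeps $\Ex[\tau(G)^4]/\Ex[\tau(G)^2]^2$ bounded, so Cauchy--Schwarz reduces matters to proving $\mathrm{Var}(q_m(G))\to 0$. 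I would establish this concentration via an Efron--Stein or edge-exposure martingale argument, using standard effective-resistance estimates for uniform spanning trees in dense random graphs to control the effect of a single-edge resampling in $G$ on the edge-inclusion probabilities that build $q_m(G)$. Finally, the event $\{G\text{ disconnected}\}$, on which we have set $X=0$, has probability $o(1)$ for constant $p$ and hence does not affect the limit.
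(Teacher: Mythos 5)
Everything you actually carry out coincides with the paper's own proof. The paper uses exactly your auxiliary space ($T_1,T_2$ uniform spanning trees of $K_n$, independent of $G$), computes $\IP(T_1,T_2\subseteq G)/p^{2n-2}=\Ex[p^{-X_n}]\to e^{2/p-2}$ by dominated convergence, and then applies Bayes' rule to obtain the limit $p^{-m}\,e^{-2}\tfrac{2^m}{m!}\big/e^{2/p-2}=e^{-2/p}(2/p)^m/m!$, discarding the disconnected event since it has probability $o(1)$. Your explicit tail estimate $\IP(X_n\ge k)\le (e^2/2)^k/k!$ (union bound over $k$-edge forests plus Lemma~\ref{lem:given_components} and AM--GM) is a clean, self-contained way to justify the domination; the paper instead recycles the dominated-convergence estimate from its introduction. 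Up to that point your proposal and the paper are the same argument.

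Where you part ways is your ``main obstacle,'' and you should know that the paper does not address it at all: it simply writes $\IP(X=m\mid G\text{ connected})=\IP(|T_1\cap T_2|=m\mid T_1,T_2\subseteq G)$ and concludes. As you correctly observe, conditioning on $A=\{T_1,T_2\subseteq G\}$ biases the law of $G$ by $\tau(G)^2$, so the right-hand side is $\Ex[\tau(G)^2 q_m(G)]/\Ex[\tau(G)^2]$ while the left-hand side is the unweighted average $\Ex[q_m(G)\mid G \text{ connected}]$; these need not agree for a general random graph (and the bias is not negligible in an $L^2$ sense here, since $\Ex[\tau(G)^2]/\Ex[\tau(G)]^2\to e^{2/p-2}>1$). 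So you have identified a genuine subtlety that the paper elides rather than resolves. At the same time, this is where your own write-up is incomplete: the Cauchy--Schwarz reduction is fine, and bounding $\Ex[\tau(G)^4]/\Ex[\tau(G)^2]^2$ is a routine four-tree analogue of your tail bound, but the key input $\mathrm{Var}(q_m(G))\to 0$ is only announced. Proving it (via Efron--Stein or effective-resistance concentration for uniform spanning trees of dense random graphs) is substantial work, essentially a quenched version of the theorem, and nothing of the sort appears in the paper. In short: your executed steps reproduce the published proof; your unfinished step is exactly the point on which the paper is silent, and completing it would give an argument strictly more careful than the paper's.
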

\begin{proof}
Let $T_1,T_2$ be two independent uniform spanning trees of $K_n$. If they have $m$ edges in common, then their union has $2(n-1)-m$ edges by the inclusion-exclusion principle. Thus $\IP(T_1,T_2\ssq G\mid |T_1\cap T_2|=m) = p^{2(n-1)-m}$, and we obtain
\begin{align}\label{eq:proofconstp}
 \frac{\IP(T_1,T_2\ssq G)}{p^{2n-2}}
 &= \sum_{m=0}^{n-1} \frac{\IP(T_1,T_2\ssq G\mid |T_1\cap T_2|=m)\cdot \IP(|T_1\cap T_2|=m)}{p^{2n-2}}\notag\\
 &= \sum_{m=0}^{n-1} p^{-m} \IP(|T_1\cap T_2|=m)\\
 &\to \sum_{m=0}^\infty  p^{-m}e^{-2} \frac{2^m}{m!} = e^{-2+2/p}\notag,
\end{align}
where the exchange of sum and limit is permitted by the dominated convergence theorem, recycling the argument of Section 1.
Therefore, we have
\begin{align*}
 &\IP(X=m\mid G\text{ connected}) 
        = \IP( |T_1\cap T_2|=m \mid T_1,T_2\ssq G) \\
 &\qquad\qquad= \frac{\IP( T_1,T_2\ssq G \mid |T_1\cap T_2|=m) \cdot \IP(|T_1\cap T_2|=m)}{\IP(T_1,T_2\ssq G)}\\
 &\qquad\qquad\sim \frac{p^{2n-2-m}\cdot e^{-2}\frac{2^m}{m!}}{p^{2n-2}e^{-2+2/p}}.
\end{align*}
Since $G$ is connected with high probability, we obtain \eqref{eq:Poi2p} after simplifying the right-hand side. 
\end{proof}


We also show a (local) central limit theorem for $p\to0$, as long as this convergence is not too fast: 
\begin{theorem}\label{thm:pto0}
 Let $G\sim G(n,p)$ for $p=p(n)\to 0$ with $p\geq n^{-2/3+\gep}$ for some fixed $\gep > 0$, and let $X$ be as in Theorem~\ref{thm:random}. Set $\gl_n=2/p(n)$. If $\ga_n \to \ga\in\IR$ as $n\to\infty$ such that $m_n=\gl_n+\ga_n\sqrt{\gl_n}\in \IN_0$, then 
 \begin{equation}\label{eq:LCLT}
  \sqrt{\gl_n} \IP(X=m_n) \to \frac{1}{\sqrt{2\pi}} e^{-\frac{\ga^2}{2}}
 \end{equation}
 as $n\to\infty$, where the convergence is uniform in $\ga$ on compact sets. In particular, we have that $\gl_n^{-1/2}(X-\gl_n) \to N(0,1)$ in distribution.
\end{theorem}

The proof rests on a sequence of technical lemmas whose goal is to estimate $\IP(X=m)$. 
As a start, we analyze the asymptotic behavior of $\IP(|T_1\cap T_2|=m)$ in greater detail. Let us set
 \begin{align*}
  r_m(n)
  &:=\frac{m!}{2^m e^{-2}} \IP(|T_1\cap T_2|=m) \\
  &=\frac{m!}{2^m e^{-2}} \sum_{j=0}^{n-m-1} (-1)^j \binom{m+j}{m} n^{-2m-2j} \frac{(n-1)!}{(n-m-j-1)!} \sum_{k=0}^{m+j} \binom{n-k}{m+j-k} \frac{n^k}{k!}
 \end{align*}
for $0 \leq m < n$ (and $r_m(n) = 0$ otherwise). We note that $r_m(n)\to 1$ pointwise for fixed $m$ as $n\to\infty$ by \eqref{eq:limit}, and $r_m(n)\leq e^4$ for all $m,n$ by \eqref{eq:bound}. Finally, let us write
 \begin{equation}\label{eq:defSnmj}
  S_{n,m}(j) := \frac{m!}{2^m e^{-2}}(-1)^j \binom{m+j}{m} n^{-2m-2j} \frac{(n-1)!}{(n-m-j-1)!} \sum_{k=0}^{m+j} \binom{n-k}{m+j-k} \frac{n^k}{k!},
 \end{equation}
 so that $r_m(n) = \sum_{j=0}^{n-m-1} S_{n,m}(j)$. Now observe that 
 \[
  n^{-m-j} \frac{(n-1)!}{(n-m-j-1)!} = \left(1-\frac{1}{n}\right)\cdots \left(1-\frac{m+j}{n}\right)
 \]
 and similarly
 \[
  n^{-m-j}\binom{n-k}{m+j-k} \frac{n^k}{k!} = \frac{1}{(m+j)!}\binom{m+j}{k} \left(1-\frac{k}{n}\right) \cdots \left( 1-\frac{m+j-1}{n}\right).
 \]
 Plugging in both of these identities into \eqref{eq:defSnmj} yields
 \begin{align*}\label{eq:Taylor}
  S_{n,m}(j)
  &= \frac{m!}{2^m e^{-2}} (-1)^j \frac{1}{(m+j)!} \binom{m+j}{m} \prod_{i=1}^{m+j} \left(1-\frac{i}{n}\right) \sum_{k=0}^{m+j} \binom{m+j}{k} \prod_{\ell=k}^{m+j-1} \left(1-\frac{\ell}{n}\right)\notag\\
  &= \frac{e^2(-2)^j}{j!} \left(1-\frac{m+j}{n}\right)^{-1} \cdot \prod_{i=1}^{m+j} \left(1-\frac{i}{n}\right)^2 \cdot \frac{1}{2^{m+j}} \sum_{k=0}^{m+j} \binom{m+j}{k} \prod_{s=1}^{k-1} \left(1-\frac{s}{n}\right)^{-1}.
 \end{align*}

\begin{lemma}\label{lem:Snm-lem}
Fix $\gd \in (0,\frac23)$, and suppose that $m \leq n^{2/3-\gd}$ and $j \leq \sqrt{m}$. Then we have
\[  S_{n,m}(j) = \frac{e^2(-2)^j}{j!} \exp \Big( {-}\frac{7m^2}{8n} + O(n^{-\gd}) \Big),\]
where the error term is uniform in $m$ and $j$.
\end{lemma}

\begin{proof}
Note that
\[1-\frac{m+j}{n} = \exp \Big( O \Big( \frac{m}{n} \Big) \Big)\]  and, making use of the Taylor expansion of $\ln(1-x)$,
\begin{align*}
\prod_{i=1}^{m+j} \left(1-\frac{i}{n}\right)^2
&= \exp \Big( {-} \sum_{i=1}^{m+j} \Big( \frac{2i}{n} + O \Big( \frac{i^2}{n^2} \Big) \Big) \Big) \\
&= \exp \Big( {-} \frac{(m+j)(m+j+1)}{n}
+ O \Big( \frac{m^3}{n^2} \Big) \Big) \\
&= \exp \Big( {-} \frac{m^2}{n} + O \Big( \frac{m^{3/2}}{n} + \frac{m^3}{n^2} \Big) \Big).
\end{align*}
By a similar calculation,
\[\prod_{s=1}^{k-1} \left(1-\frac{s}{n}\right)^{-1} = \exp\Big(
\frac{k^2}{2n} + O \Big( \frac{m^{3/2}}{n} + \frac{m^3}{n^2} \Big) \Big)
\]
for $k \leq m+j$. Thus, by the assumptions on $m$, 
\[S_{n,m}(j) = \frac{e^2(-2)^j}{j!} 
\exp \Big( {-} \frac{m^2}{n} + O(n^{-\gd}) \Big) \cdot 
\frac{1}{2^{m+j}} \sum_{k=0}^{m+j} \binom{m+j}{k} \exp \Big( \frac{k^2}{2n} + O(n^{-\gd}) \Big),
\]
where the error term in the sum is uniform in $k$. It remains to deal with the inner sum. We are done if we can show that
\[\Sigma := \frac{1}{2^{m+j}} \sum_{k=0}^{m+j} \binom{m+j}{k}
\exp \Big( \frac{k^2}{2n} \Big) = \exp \Big( \frac{m^2}{8n} + O(n^{-\gd}) \Big).
\]
This is trivial if $m < n^{(1-\gd)/2}$, since then $\frac{m^2}{8n} = O(n^{-\gd})$ and $\frac{k^2}{2n} = O(n^{-\gd})$ for $k \leq m+j$, so we can assume that $m \geq n^{(1-\gd)/2}$.
Now observe that the left side can be interpreted as $\Ex(e^{Y^2/(2n)})$, where $Y$ is a $\Bin(m+j,\frac12)$-distributed random variable. By a standard Chernoff bound \cite[Theorem 2.1]{JLR00}, we have
\[\IP\Big(\Big|Y - \frac{m+j}{2}\Big| > t\Big) = 2\IP\Big(Y - \frac{m+j}{2} < t\Big) \leq 2 \exp \Big( {-} \frac{t^2}{m+j} \Big).\]
We apply this with $t = m^{3/2} n^{\gd - 1/2}$: then
\[\frac{t^2}{m+j} \sim \frac{m^3 n^{2\gd-1}}{m} = m^2n^{2\gd-1} \geq n^{\gd}
\]
goes to infinity, and it does so faster than
\[\frac{k^2}{2n} \leq \frac{(m+j)^2}{2n} \sim m^2n^{-1}.\]
This means that the contribution of terms with $|k - \frac{m+j}{2}| > t$ is negligible (it tends to $0$ faster than any power of $n$). If, on the other hand, $|k - \frac{m+j}{2}| \leq t$, then we have
\[\frac{k^2}{2n} = - \frac{(m+j)^2}{8n} + \frac{k(m+j)}{2n} + \frac{1}{2n} \Big( k - \frac{m+j}{2} \Big)^2,\]
and the final term can be bounded as follows:
\[\frac{1}{2n} \Big( k - \frac{m+j}{2} \Big)^2 \leq \frac{t^2}{2n} = \frac{m^3}{2n^{2-2\gd}} \leq \frac{n^{2-3\gd}}{2n^{2-2\gd}} = \frac{1}{2n^{\gd}}.\]
So we conclude that
\begin{align*}
\Sigma &= 
\exp \Big( {-} \frac{(m+j)^2}{8n} + O(n^{-\gd}) \Big)
\frac{1}{2^{m+j}} \sum_{k=0}^{m+j} \binom{m+j}{k} \exp \Big( \frac{k(m+j)}{2n} \Big) \\
&= \exp \Big( {-} \frac{(m+j)^2}{8n} + O(n^{-\gd}) \Big)
\Big( \frac{1 + e^{(m+j)/(2n)}}{2} \Big)^{m+j} \\
&= \exp \Big( {-} \frac{(m+j)^2}{8n} + O(n^{-\gd}) \Big) \cdot \exp \Big( \frac{(m+j)^2}{4n} + O \Big( \frac{(m+j)^3}{n^2} \Big) \Big) \\
&= \exp \Big( \frac{(m+j)^2}{8n} + O(n^{-\gd}) \Big) \\
&= \exp \Big( \frac{m^2}{8n} + O(n^{-\gd}) \Big)
\end{align*}
with a uniform error term, which completes the proof of the lemma.
\end{proof}

\begin{lemma}\label{lem:rm-lem}
Fix $\gd \in (0,\frac23)$, and suppose that $m \leq n^{2/3-\gd}$. Then we have
\[  r_m(n) = \exp \Big( {-}\frac{7m^2}{8n} + O(n^{-\gd}) \Big) + O\big(e^{-\sqrt{m}}\big),\]
uniformly in $m$.    
\end{lemma}

\begin{proof}
    We split the sum $r_m(n) = \sum_{j=0}^{n-m-1} S_{n,m}(j)$ into the part with $j \leq \sqrt{m}$ and the rest. For $j > \sqrt{m}$, we use the inequality
    \begin{align*}
|S_{n,m}(j)| &= \frac{m!}{2^m e^{-2}} \binom{m+j}{m} n^{-2m-2j} \frac{(n-1)!}{(n-m-j-1)!} \sum_{k=0}^{m+j} \binom{n-k}{m+j-k} \frac{n^k}{k!} \\
    &\leq \frac{m!}{2^m e^{-2}} \binom{m+j}{m} \sum_{k=0}^{m+j} \frac{1}{(m+j-k)!k!} = \frac{e^22^j}{j!},
    \end{align*}
which follows from~\eqref{eq:aux-upper-bound}. Since $\frac{2^j}{j!} = O(e^{-j})$, we have
\[\sum_{j > \sqrt{m}} S_{n,m}(j) = O \big( e^{-\sqrt{m}} \big).\]
Now, since the error term in Lemma~\ref{lem:Snm-lem} is uniform in $j$, we obtain
\begin{align*}
r_m(n) &= \sum_{j \leq m} S_{n,m}(j) + O \big( e^{-\sqrt{m}} \big) = \sum_{j \leq m} \frac{e^2(-2)^j}{j!} \exp \Big( {-}\frac{7m^2}{8n} + O(n^{-\gd}) \Big) + O \big( e^{-\sqrt{m}} \big) \\
&= \exp \Big( {-}\frac{7m^2}{8n} + O(n^{-\gd}) \Big) + O \big( e^{-\sqrt{m}} \big).
\end{align*} 
\end{proof}

We are now ready to prove Theorem~\ref{thm:pto0}.

\begin{proof}[Proof of Theorem~\ref{thm:pto0}]
With slight abuse of notation, we drop the dependence of $\gl_n,m_n,\ga_n$ on $n$ and only write $\gl,m,\ga$ instead.
    We apply Bayes' law as the proof of Theorem~\ref{thm:random}. Since $G(n,p)$ is connected with high probability for $p\geq (1+\gep)\frac{\ln n}{n}$, we have 
 \begin{align*}
  \IP(X=m) 
  &\sim \IP(X=m\mid G\text{ connected })
   = \IP(|T_1\cap T_2|=m \mid T_1,T_2\ssq G)\\
  &=\frac{\IP(T_1,T_2\ssq G \mid |T_1\cap T_2|=m)\IP(|T_1\cap T_2|=m)}{\IP(T_1,T_2\ssq G)}\\
  &= \frac{p^{2n-2-m}\IP(|T_1\cap T_2|=m)}{\IP(T_1,T_2\ssq G)}.
 \end{align*}
Recall that $r_m(n) = \frac{m!}{2^m e^{-2}} \IP(|T_1\cap T_2|=m)$, or equivalently $\IP(|T_1\cap T_2|=m) = \frac{2^m e^{-2}r_m(n)}{m!}$. It follows that
\[\IP(T_1,T_2\ssq G) = \sum_{h \geq 0} p^{2n-2-h} \IP(|T_1\cap T_2|=h) = p^{2n-2}e^{-2} \sum_{h \geq 0} \frac{2^h r_h(n)}{h!p^h},\]
and thus
\begin{equation}\label{eq:PX=m}
\IP(X=m) \sim \frac{\frac{2^m r_m(n)}{m!p^m}}{\sum_{h \geq 0} \frac{2^h r_h(n)}{h!p^h}} = \frac{\frac{\gl^m r_m(n)}{m!}}{\sum_{h \geq 0} \frac{\gl^h r_h(n)}{h!}}.
\end{equation}
 Recall that we are assuming that $p \geq n^{-2/3+\gep}$. Moreover, we know that $r_m(n)\leq e^4$ for all $m,n$ by \eqref{eq:bound}, and Lemma~\ref{lem:rm-lem} with $\gd = \frac{\gep}{2}$ yields
\begin{equation}\label{eq:rmn-asy}
    r_m(n) = \exp \Big( {-}\frac{7m^2}{8n} + O(n^{-\gep/2}) \Big) + O\big(e^{-\sqrt{m}}\big)
\end{equation}
for $m \leq n^{2/3-\gep/2}$, uniformly in $m$.
Let $M$ be a $\Poi(\gl)$-distributed random variable. Then
\[e^{-\gl} \sum_{h \geq 0} \frac{\gl^h r_h(n)}{h!} = \Ex(r_{M}(n)).\]
By the Chernoff bounds for the Poisson distribution (cf. \cite[Remark~2.6]{JLR00}), we have
 \begin{equation}\label{eq:chernoff-poi}
   \IP(|M-\gl|\geq t) \leq 2 \exp \Big( {-} \frac{t^2}{2(\gl+t/3)} \Big)
 \end{equation}
for $t>0$. We apply this with $t = \frac{\gl}{2}$ (noting that $p\to 0$ and hence $\gl\to\infty$) and use the constant upper bound $r_m(n)\leq e^4$ to obtain
\[e^{-\gl} \sum_{h \geq 0} \frac{\gl^h r_h(n)}{h!} = e^{-\gl} \sum_{|h-\gl| \leq \gl/2} \frac{\gl^h r_h(n)}{h!} + O(e^{-3\gl/28}).\]
The error term at the end of~\eqref{eq:rmn-asy} is $O(e^{-\sqrt{\gl/2}})$ for $|m -\gl| \leq \frac{\gl}{2}$. So it follows that
\[e^{-\gl} \sum_{h \geq 0} \frac{\gl^h r_h(n)}{h!} =
e^{-\gl} \Big( 1 + O(n^{-\gep/2}) \Big) \sum_{|h-\gl| \leq \gl/2} \frac{\gl^h}{h!} \exp \Big( {-}\frac{7h^2}{8n} \Big) + O\big(e^{-\sqrt{\gl/2}}\big).\]
So it suffices to consider the sum
\begin{equation}\label{eq:chernoff-hsum}
 e^{-\gl} \sum_{|h-\gl| \leq \gl/2} \frac{\gl^h}{h!} \exp \Big( {-}\frac{7h^2}{8n} \Big).
\end{equation}
We distinguish two cases depending on how large $\gl$ is (equivalently, how small $p$ is).
Assume first that $\gl < n^{(1-\gep)/2}$. Then we have $\frac{7h^2}{8n} \leq \frac{63\gl^2}{32n} = O(n^{-\gep})$ for $|h-\gl| \leq \gl/2$ as well as
$\frac{7\gl^2}{8n} = O(n^{-\gep})$, so
\[e^{-\gl} \sum_{|h-\gl| \leq \gl/2} \frac{\gl^h}{h!} \exp \Big( {-}\frac{7h^2}{8n} \Big) = \exp \Big( {-}\frac{7\gl^2}{8n} + O(n^{-\gep}) \Big) + O\big(e^{-\sqrt{\gl/2}}\big),\]
implying finally that
\begin{equation}\label{eq:sum-asy}
e^{-\gl} \sum_{h \geq 0} \frac{\gl^h r_h(n)}{h!} =  \exp \Big( {-}\frac{7\gl^2}{8n} + O(n^{-\gep/2}) \Big) + O\big(e^{-\sqrt{\gl/2}}\big).
\end{equation}
If, on the other hand, $\gl \geq n^{(1-\gep)/2}$, then
we proceed as in the proof of Lemma~\ref{lem:Snm-lem}. We apply the Chernoff bound~\eqref{eq:chernoff-poi} once again, this time with $t = \gl^{3/2}n^{\gep-1/2}$. We may assume here that $\gep < \frac13$, so that
\[t = \gl \cdot \gl^{1/2} \cdot n^{\gep - 1/2} \leq \gl \big(2n^{2/3-\gep}\big)^{1/2} \cdot n^{\gep - 1/2} \leq \sqrt{2} \gl n^{\gep/2 - 1/6} \leq \frac{3\gl}{2}.\]
It then follows that
\[\IP(|M-\gl|\geq t) \leq 2 e^{-t^2/(3\gl)}.\]
Now
\[\frac{t^2}{3\gl} = \frac{\gl^2 n^{2\gep-1}}{3} \geq \frac{n^{\gep}}{3}\]
goes to infinity, and it does so faster than $\frac{7\gl^2}{8n}$, so all indices $h$ in~\eqref{eq:chernoff-hsum} with $|h-\gl| > t$ only contribute another error term. Finally, for $|h-\gl| \leq t$, we have
\begin{align*}
\frac{7h^2}{8n} &= - \frac{7\gl^2}{8n} + \frac{7\gl h}{4n} + \frac{7(h-\gl)^2}{8n} \\
&= - \frac{7\gl^2}{8n} + \frac{7\gl h}{4n} + O \Big( \frac{t^2}{n} \Big) \\
&= - \frac{7\gl^2}{8n} + \frac{7\gl h}{4n} + O \Big( \gl^3 n^{2\gep-2} \Big) \\
&= - \frac{7\gl^2}{8n} + \frac{7\gl h}{4n} + O \big( n^{-\gep} \big).
\end{align*}
Thus
\[\sum_{|h-\gl| \leq t} \frac{\gl^h}{h!} \exp \Big( {-}\frac{7h^2}{8n} \Big)
= \exp \Big( \frac{7\gl^2}{8n} + O \big( n^{-\gep} \big) \Big) \sum_{|h-\gl| \leq t} \frac{1}{h!} \big( \gl e^{-7\gl/(4n)} \big)^h.\]
The final sum can be interpreted in terms of a random variable $\tilde{M}$ that follows a Poisson distribution with mean $\tilde{\gl} = \gl e^{-7\gl/(4n)}  = \gl + O (\frac{\gl^2}{n})$: we have
\begin{align*}
    \sum_{|h-\gl| \leq t} \frac{1}{h!} \big( \gl e^{-7\gl/(4n)} \big)^h = \exp \Big( \gl e^{-7\gl/(4n)} \Big) \IP \Big( |\tilde{M} - \gl| \leq t \Big).
\end{align*}
Now
\[\IP \Big( |\tilde{M} - \gl| > t \Big) \leq \IP \Big( |\tilde{M} - \tilde{\gl}| > t - |\gl - \tilde{\gl}|\Big),\]
and another application of the Chernoff bound~\eqref{eq:chernoff-poi} with $\gl$ replaced by $\tilde{\gl}$ and $t$ replaced by $\tilde{t} = t - |\gl - \tilde{\gl}|$ (note here that $|\gl - \tilde{\gl}| = O(\frac{\gl^2}{n}) = O(tn^{-\gep})$) yields
\[\IP \Big( |\tilde{M} - \gl| > t \Big) = O \Big( \exp \Big( {-} \frac{n^{\gep}}{3} \Big) \Big),\]
thus
\[\sum_{|h-\gl| \leq t} \frac{\gl^h}{h!} \exp \Big( {-}\frac{7h^2}{8n} \Big)
= \exp \Big( \frac{7\gl^2}{8n} + \gl e^{-7\gl/(4n)} + O \big( n^{-\gep} \big) \Big) = \exp \Big( \gl - \frac{7\gl^2}{8n} + O \big( n^{-\gep} \big) \Big).\]
So putting everything together, we find once again that~\eqref{eq:sum-asy} holds. Now, under the conditions of Theorem~\ref{thm:pto0}, we can apply \eqref{eq:rmn-asy} to $m = \gl + \ga \sqrt{\gl}$. Together with~\eqref{eq:PX=m} and~\eqref{eq:sum-asy}, it gives us
\begin{align*}
\IP(X = m) &\sim \frac{\frac{\gl^mr_m(n)}{m!}}{\sum_{h \geq 0} 
\frac{\gl^hr_h(n)}{h!}} \sim \frac{\gl^m}{m!} \exp \Big( {-} \gl + \frac{7\gl^2}{8n} - \frac{7m^2}{8n} \Big) \\
&= e^{-\gl} \frac{\gl^m}{m!} \exp \Big( {-} \gl + O \Big( \frac{\gl^{3/2}}{n} \Big) \Big) \sim e^{-\gl} \frac{\gl^m}{m!},    
\end{align*}
with error terms that are uniform in $\ga$ as required. The rest is the standard approximation of the Poisson distribution by a Gaussian distribution for large $\gl$: by Stirling's formula, we have
\[\IP(X = m) \sim \exp \Big( {-}\gl + m \ln \gl - m \ln m + m - \frac12 \ln(2\pi m) + O(m^{-1}) \Big),\]
and the expression inside the exponential simplifies to 
\[- \frac{\ga^2}{2} - \frac12 \ln(2\pi \gl) + O (\gl^{-1/2}),\]
completing the proof of the local central limit theorem \eqref{eq:LCLT}. The convergence in distribution then follows by observing that 
 \[
  \IP\left(a\leq \frac{X-\gl}{\sqrt{\gl}}\leq b\right) 
  = \sum_{m=\lceil\gl+a\sqrt{\gl}\rceil}^{\lfloor \gl+b\sqrt{\gl}\rfloor} \IP(X=m) 
  \to \int_a^b \frac{1}{\sqrt{2\pi}} e^{-\ga^2/2} \,\mathrm{d}\ga
 \]
 due to the uniformity for all $[a,b]\ssq \IR$.
\end{proof}

\begin{remark}
It is very likely that the exponent $\frac23$ in Theorem~\ref{thm:pto0} is not best possible. With a more detailed asymtotic analysis, it might even be possible to prove the theorem for all $p$ above the threshold for connectedness.    
\end{remark}

\section{Complete multipartite graphs}\label{sec:multipartite}

We now prove a result similar to Theorem~\ref{thm:labelled1} for complete multipartite graphs. The key to our proof of Proposition~\ref{prop:moments} and thus to Theorem~\ref{thm:labelled1} was the fact that disjoint edges are independent. Asymptotically, this will also be the case for the complete multipartite graphs that we consider. We will make use of a recent formula due to Li, Chen and Yan \cite{lcy} for the number of spanning trees in complete multipartite graphs that contain a fixed forest. 

\begin{lemma}\label{lem:lcy-formula}
Let $K_{n_1,n_2,\ldots,n_d}$ be a complete multipartite graph with vertex partition $V_1 \cup V_2 \cup \cdots \cup V_d$ (such that $|V_i| = n_i$), and let $F$ be a spanning forest with components $T_1,T_2,\ldots,T_k$. For $i \in \{1,2,\ldots,d\}$ and $j \in \{1,2,\ldots,k\}$, let $n_{ij}$ be the number of vertices of $T_j$ that lie in $V_i$. Now set
\[\ga_j = \sum_{i=1}^d (n-n_i)n_{ij}.\]
Then the number of spanning trees that contain $F$ is
\[\frac{\prod_{j=1}^k \ga_j}{(d-1)^{d-2} \prod_{i=1}^d (n-n_i)^2} \sum_{T \in \mathcal{T}(K_d)} \prod_{v_pv_q \in E(T)} (n-n_p)(n-n_q) \big( 1 - (d-1)a_{pq} \big),\]
where $a_{pq} = \sum_{j=1}^k \frac{n_{pj}n_{qj}}{\ga_j}$, and the sum goes over the set $\mathcal{T}(K_d)$ of spanning trees of the complete graph $K_d$ with vertex set $v_1,v_2,\ldots,v_d$.
\end{lemma}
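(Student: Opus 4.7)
The plan is to apply Kirchhoff's matrix-tree theorem to the contracted multigraph $G^{*} := K_{n_1,\ldots,n_d}/F$, whose spanning trees are in natural bijection with those spanning trees of $K_{n_1,\ldots,n_d}$ which contain $F$. Between the contracted vertices representing $T_j$ and $T_{j'}$ there are exactly $m_{j}m_{j'} - \sum_{i} n_{ij}n_{ij'}$ parallel edges (with $m_{j} = |T_{j}| = \sum_{i} n_{ij}$), since each pair of vertices in different parts contributes one edge, while loops arising from chords within a single $T_{j}$ leave the Laplacian unchanged. Writing $M = (n_{ij})$ as a $d \times k$ matrix and $\mathbf{m} = M^{T}\mathbf{1}_{d}$, a short calculation then gives the compact form
\[
L(G^{*}) = \mathrm{diag}(\alpha_{1},\ldots,\alpha_{k}) + M^{T}(I_{d}-J_{d})M.
\]

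The crucial structural observation is that the correction $M^{T}(I_{d}-J_{d})M$ has rank at most $d$, so any principal cofactor of $L(G^{*})$ can be computed via the matrix determinant lemma (or, equivalently, by a Schur complement). This manoeuvre collapses a $(k-1) \times (k-1)$ determinant into a $d \times d$ one, and the prefactor $\prod_{j=1}^{k}\alpha_{j}$ is cleanly extracted from $\det\mathrm{diag}(\alpha_{j})$. The entries of the resulting $d \times d$ matrix are naturally built out of the quantities $\sum_{j} n_{pj}n_{qj}/\alpha_{j}$, which is precisely how the $a_{pq}$ appear.

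The final step is to recognise this $d \times d$ determinant as a cofactor of a weighted Laplacian of $K_{d}$ with edge weights $(n-n_{p})(n-n_{q})\bigl(1-(d-1)a_{pq}\bigr)$; after conjugating by $\mathrm{diag}(n-n_{i})$ and carrying out the remaining algebraic cleanup, the weighted matrix-tree theorem on $K_{d}$ evaluates the cofactor as $\sum_{T\in\mathcal{T}(K_{d})}\prod_{v_{p}v_{q}\in E(T)}(n-n_{p})(n-n_{q})\bigl(1-(d-1)a_{pq}\bigr)$, while the remaining constants $(d-1)^{d-2}$ and $\prod_{i}(n-n_{i})^{-2}$ arise as the normalization required to match the determinant with the cofactor. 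I expect the hardest step to be this final matching: arranging the matrix determinant lemma so that both the factor $(d-1)$ in front of $a_{pq}$ and the symmetric multiplicative structure $(n-n_{p})(n-n_{q})$ emerge cleanly with the correct overall normalization. A further subtlety is that $L(G^{*})$ is singular (annihilated by $\mathbf{1}_{k}$), so either a careful choice of which row and column to delete before inverting $\mathrm{diag}(\alpha_{j})$, or a preliminary rank-one shift of $L(G^{*})$, is needed for the matrix determinant lemma to apply transparently.
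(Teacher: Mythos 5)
First, a point of comparison: the paper does not prove this lemma at all --- it is quoted verbatim from Li, Chen and Yan \cite{lcy} --- so there is no in-paper argument to match your proposal against; it has to stand on its own.

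On its own terms, your setup is correct as far as it goes: contracting $F$ gives a multigraph whose spanning trees are in bijection with the spanning trees of $K_{n_1,\ldots,n_d}$ containing $F$, the multiplicity between the vertices for $T_j$ and $T_{j'}$ is indeed $m_jm_{j'}-\sum_i n_{ij}n_{ij'}$, loops are irrelevant, and the identity $L(G^*)=\mathrm{diag}(\alpha_1,\ldots,\alpha_k)+M^T(I_d-J_d)M$ checks out (the diagonal entry $\alpha_j+\sum_i n_{ij}^2-m_j^2$ equals the degree $m_j(n-m_j)-\sum_i n_{ij}(n_i-n_{ij})$). The low-rank-update strategy is also a perfectly plausible route to the formula. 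The genuine gap is that the proof stops exactly where the content of the lemma lies: you assert, but do not show, that the reduced $d\times d$ (or, after handling the kernel by a rank-one shift such as $\kappa=\tfrac1{k^2}\det(L(G^*)+J_k)$, a $(d+1)\times(d+1)$) determinant coincides with $(d-1)^{2-d}\prod_i(n-n_i)^{-2}$ times the cofactor of the weighted Laplacian of $K_d$ with edge weights $(n-n_p)(n-n_q)(1-(d-1)a_{pq})$. Nothing in the sketch explains where the factor $(d-1)$ inside the weights, the normalization $(d-1)^{d-2}$, or the product structure $(n-n_p)(n-n_q)$ come from; these are exactly the nontrivial features of the Li--Chen--Yan formula, and the matching is not a routine rearrangement --- it requires, for instance, exploiting identities such as $\sum_p (n-n_p)a_{pq}=n_q$ and $\sum_p (n-n_p)\sum_j n_{pj}/\alpha_j=k$ to eliminate the auxiliary rows produced by the determinant lemma. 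As written, the argument establishes that the answer is \emph{some} explicit $d\times d$ determinant built from the $a_{pq}$, but not that it equals the stated tree-sum with the stated constants; until that computation is carried out (or the formula is simply cited from \cite{lcy}, as the paper does), the lemma is not proved.
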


\begin{theorem}\label{thm:multipartite}
Let $d > 1$ be fixed, and consider complete multipartite graphs $K_{n_1,n_2,\ldots,n_d}$ with $n = n_1 + n_2 + \cdots + n_d$ such that $\frac{n_i}{n}$ converges to a positive constant $c_i$ for each $i$ as $n \to \infty$. Take two independent spanning trees, both chosen uniformly at random. The number of common edges converges in distribution to a Poisson random variable with expected value
\[\sum_{1 \leq i < j \leq d} \frac{c_i c_j (2-c_i-c_j)^2}{(1-c_i)^2(1-c_j)^2}.\]
\end{theorem}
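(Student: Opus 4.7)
The plan is to adapt the method-of-moments argument from Proposition~\ref{prop:moments}, replacing Lemma~\ref{lem:given_components} by Lemma~\ref{lem:lcy-formula}. Let $X_n$ denote the number of common edges in the two uniform random spanning trees of $K_{n_1,\ldots,n_d}$. For each edge $e$ I introduce the indicator $Z_e$ that $e$ lies in both trees, so that $X_n = \sum_e Z_e$ and, by independence of the two trees, $\Ex(Z_{e_1}\cdots Z_{e_s}) = \IP(e_1,\ldots,e_s \in T)^2$ for any set of distinct edges, where $T$ is a single uniform random spanning tree. My goal is to show that the $r$-th moment of $X_n$ converges to the $r$-th moment of $\Poi(\lambda)$ with $\lambda = \sum_{i<j}\frac{c_ic_j(2-c_i-c_j)^2}{(1-c_i)^2(1-c_j)^2}$; moment characterization of the Poisson distribution then yields convergence in distribution.

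The central input is an asymptotic analysis of $\IP(e_1,\ldots,e_s \in T)$ via Lemma~\ref{lem:lcy-formula}. Applying the lemma with $F$ taken to be the edgeless spanning forest gives the total spanning tree count (all $a_{pq}$ vanish, since each component is a single vertex), while applying it with $F = \{e_1,\ldots,e_s\}$ for disjoint edges $e_\ell$ gives the count of spanning trees containing them. The key observation is that all quantities $a_{pq}$ arising in the lemma are $O(1/n)$ when $s$ is fixed, so the correction factors $1-(d-1)a_{pq}$ perturb the sum over $\mathcal{T}(K_d)$ only at order $1+O(1/n)$. Hence this sum cancels between numerator and denominator up to a multiplicative $1+O(1/n)$, and the asymptotics are governed entirely by the ratio of the $\prod_j \alpha_j$ factors. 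This ratio telescopes into a product over the edges, yielding for a single edge $e$ joining $V_i$ and $V_j$ the estimate
\[
 \IP(e \in T) \sim \frac{2-c_i-c_j}{n(1-c_i)(1-c_j)},
\]
and, for a disjoint family, the asymptotic independence $\IP(e_1,\ldots,e_s \in T) \sim \prod_{\ell=1}^s \IP(e_\ell \in T)$.

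With asymptotic independence in hand, the remainder mirrors Proposition~\ref{prop:moments}. Expanding $\Ex(X_n^r) = \sum_{e_1,\ldots,e_r} \Ex(Z_{e_1}\cdots Z_{e_r})$ and collapsing repeated edges produces the same combinatorial coefficient $r!\,[t^r](e^t-1)^s$ as in the $K_n$ case. Disjoint $s$-tuples are classified by their type profile $(s_{ij})_{i<j}$, recording how many edges join $V_i$ to $V_j$; the number of unordered $s$-sets of this type is asymptotic to $\prod_{i<j}(n_in_j)^{s_{ij}}/s_{ij}!$, while non-disjoint $s$-sets contribute only $O(n^{-1})$ by the same codimension count used in Section~2. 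Multiplying by $\prod_\ell \IP(e_\ell \in T)^2$ turns the contribution of each type profile into $\prod_{i<j}\lambda_{ij}^{s_{ij}}/s_{ij}!$ with $\lambda_{ij} = c_ic_j(2-c_i-c_j)^2/((1-c_i)(1-c_j))^2$, and summing over type profiles with $\sum_{i<j} s_{ij}=s$ gives $\lambda^s/s!$ by the multinomial identity. The closing generating-function manipulation from Proposition~\ref{prop:moments} then delivers
\[
 \Ex(X_n^r) \to r!\,[t^r] \exp\bigl(\lambda(e^t-1)\bigr),
\]
which is the $r$-th moment of $\Poi(\lambda)$.

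The main obstacle will be the bookkeeping inside Lemma~\ref{lem:lcy-formula}: one has to verify rigorously that the sum over $\mathcal{T}(K_d)$ really does cancel up to a relative error of $O(1/n)$ uniformly in the chosen disjoint family, and that the $\alpha_j$ factors telescope into a clean product over the edges. Once this asymptotic independence for disjoint edges is established with matching error bounds, the rest of the argument is a faithful transcription of the $K_n$ proof, with the single rate $2$ replaced by the aggregated rate $\lambda = \sum_{i<j}\lambda_{ij}$.
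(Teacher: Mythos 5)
Your proposal follows essentially the same route as the paper's proof: the method of moments with Lemma~\ref{lem:lcy-formula} as input, the observation that all $a_{pq}=O(n^{-1})$ so the sum over $\mathcal{T}(K_d)$ cancels between numerator and denominator, the resulting per-edge probability $\frac{2n-n_i-n_j}{(n-n_i)(n-n_j)}$ and asymptotic independence of disjoint edges, and the same generating-function conclusion. Your type-profile/multinomial bookkeeping is just a more explicit rewriting of the paper's step $\sum_{\{e_1,\ldots,e_s\}}\prod_\ell \Ex(Z_{e_\ell}) \sim \frac{1}{s!}\bigl(\sum_e \Ex(Z_e)\bigr)^s$, so the argument is correct and matches the paper's.
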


\begin{remark}
    In the special case $d=2$ (complete bipartite graphs), the expression for the expected value reduces to $\frac{1}{c_1c_2}$. In the special case $c_1 = c_2 = \cdots = c_d = \frac1d$ (regular multipartite graphs), it reduces to $\frac{2d}{d-1}$.
\end{remark}

\begin{proof}
In our setting, $n - n_i = \Omega(n)$ for every $i$, thus $\ga_j = \Omega(n)$ for every $j$. In the definition of $a_{pq}$, $n_{pj}n_{qj} = 0$ whenever the component $T_j$ of $F$ is a single vertex, since this vertex cannot be in both $V_p$ and $V_q$. Moreover, if the number of edges in $F$ is bounded by a constant $M$, then $\sum_{j=1}^k n_{pj}n_{qj} \leq M^2$ since at most $M$ vertices of $V_p$ and at most $M$ vertices of $V_q$ can lie in nontrivial components of $F$. Thus, for a bounded number of edges in $F$, we have
\[(d-1)a_{pq} = (d-1) \sum_{j=1}^k \frac{n_{pj}n_{qj}}{\ga_j} = O(n^{-1}).\]
This in turn means that the number of spanning trees of the complete multipartite graph $K_{n_1,n_2,\ldots,n_d}$ that contain $F$ is
\[\frac{\prod_{j=1}^k \ga_j}{(d-1)^{d-2} \prod_{i=1}^d (n-n_i)^2} \sum_{T \in \mathcal{T}(K_d)} \prod_{v_pv_q \in E(T)} (n-n_p)(n-n_q) \big( 1 - O(n^{-1}) \big).\]
This implies that the proportion of these spanning trees is \[\prod_{j=1}^k \ga_j \prod_{i=1}^d (n-n_i)^{-n_i} (1 - O(n^{-1})),\] since the product $\prod_{j=1}^k \ga_j$ is the only remaining part that depends on $F$, and this product evaluates to $\prod_{i=1}^d (n-n_i)^{n_i}$ for the forest that only consists of single vertices. This follows from the fact that if the component $T_j$ of $F$ is a single vertex in $V_x$, then $\ga_j = n-n_x$. On the other hand, 
if the component $T_j$ of $F$ is a single edge between partite sets $V_x$ and $V_y$, then we have
\[\ga_j = (n-n_x) + (n-n_y) = 2n-n_x-n_y.\]
Thus, if $F$ consists of disjoint edges $e_1,e_2,\ldots,e_h$, where $e_j$ connects $V_{x_j}$ and $V_{y_j}$, and otherwise single vertices, then the proportion of spanning trees containing $F$ is
\begin{equation}\label{eq:asymp_indep}
\prod_{j=1}^h \frac{2n-n_{x_j}-n_{y_j}}{(n-n_{x_j})(n-n_{y_j})}  \big( 1 - O(n^{-1}) \big).
\end{equation}
Here, the error term is uniform for every fixed $h$. This means that sets of disjoint edges are asymptotically independent. Moreover, even if the edges are not necessarily disjoint, it follows from Lemma~\ref{lem:lcy-formula} that the proportion is uniformly bounded by $O(n^{-h})$.

Now we can follow the same lines as in the proof of Proposition~\ref{prop:moments}: if we let $X_n$ be the number of common edges and let $Z_e$ be an indicator random variable for each edge again, then the formula
\[\Ex(X_n^r) = \sum_{s=1}^r \sum_{\{e_1,e_2,\ldots,e_s\}} r! [t^r] (e^t-1)^s \Ex(Z_{e_1}Z_{e_2}\cdots Z_{e_s})\]
remains true. Edge sets $\{e_1,e_2,\ldots,e_s\}$ that are not disjoint are negligible as their number is $O(n^{2s-1})$. Thus we can use asymptotic independence (as given by~\eqref{eq:asymp_indep}) to replace $\Ex(Z_{e_1}Z_{e_2}\cdots Z_{e_s})$ by $\Ex(Z_{e_1})\Ex(Z_{e_2})\cdots \Ex(Z_{e_s})$. This yields
\[\Ex(X_n^r) \sim \sum_{s=1}^r \frac{r! [t^r] (e^t-1)^s}{s!} \Big( \sum_e \Ex(Z_e) \Big)^s.\]
Since the number of edges between $V_i$ and $V_j$ is asymptotically equal to $c_i c_j n^2$, we have
\[\sum_e \Ex(Z_e) \sim \sum_{1 \leq i < j \leq d} c_i c_j n^2 \Big( \frac{2n-n_i-n_j}{(n-n_i)(n-n_j)} \Big)^2 \sim \sum_{1 \leq i < j \leq d} \frac{c_i c_j (2-c_i-c_j)^2}{(1-c_i)^2(1-c_j)^2}.\]
This shows that the moments of $X_n$ converge, completing the proof in the same way as for Theorem~\ref{thm:labelled1}.
\end{proof}

\section{General graphs}

Theorems~\ref{thm:labelled1} and~\ref{thm:multipartite} suggest the following question: under which general conditions does the number of common edges of two independent random spanning trees of a large graph $G$ approximately follow a Poisson distribution? This is clearly not the case for all graphs: for example, if $G$ is a tree with $n$ vertices, then the number of common edges is trivially $n-1$. If we consider the complete bipartite graph $K_{k,n-k}$ for fixed $k$, the number of common edges approximately follows a binomial distribution $\mathrm{Bin}(n-k,\frac1k)$ (and is thus asymptotically normally distributed).

One expects a Poisson distribution for denser graphs, but even then this is not necessarily the case: for example, if the graph has bridges, then these are always common edges. As another example, if we consider a graph consisting of two complete graphs $K_{n/2}$ connected by a fixed number $k$ of disjoint edges, then each of these edges belongs to a random spanning tree with a probability that is bounded below by $\frac1k$, resulting in a non-Poisson distribution once again.

\end{document}